\documentclass[12pt]{amsart}
\usepackage{amssymb,amscd}



\textwidth 6.5truein
\textheight 8.67truein
\oddsidemargin 0truein
\evensidemargin 0truein
\topmargin 0truein

\let\frak\mathfrak
\let\Bbb\mathbb

\def\>{\relax\ifmmode\mskip.666667\thinmuskip\relax\else\kern.111111em\fi}
\def\<{\relax\ifmmode\mskip-.333333\thinmuskip\relax\else\kern-.0555556em\fi}
\def\vsk#1>{\vskip#1\baselineskip}
\def\vv#1>{\vadjust{\vsk#1>}\ignorespaces}
\def\vvn#1>{\vadjust{\nobreak\vsk#1>\nobreak}\ignorespaces}

\let\Medskip\medskip
\def\medskip{\par\Medskip}
\let\Bigskip\bigskip
\def\bigskip{\par\Bigskip}

\let\Maketitle\maketitle
\def\maketitle{\Maketitle\thispagestyle{empty}\let\maketitle\empty}

\newtheorem{thm}{Theorem}[section]

\newtheorem{lem}[thm]{Lemma}

\newtheorem{conj}[thm]{Conjecture}

\numberwithin{equation}{section}

\theoremstyle{definition}
\newtheorem*{rem}{Remark}

\let\mc\mathcal
\let\nc\newcommand

\nc{\on}{\operatorname}
\nc{\Z}{{\mathbb Z}}
\nc{\C}{{\mathbb C}}
\nc{\N}{{\mathbb N}}
\nc{\pone}{{\mathbb C}{\mathbb P}^1}
\nc{\arr}{\rightarrow}
\nc{\larr}{\longrightarrow}
\nc{\al}{\alpha}
\nc{\W}{{\mc W}}
\nc{\la}{\lambda}
\nc{\su}{\widehat{{\mathfrak sl}}_2}
\nc{\g}{{\mathfrak g}}
\nc{\h}{{\mathfrak h}}
\nc{\m}{{\mathfrak m}}
\nc{\n}{{\mathfrak n}}
\nc{\Gm}{\Gamma}
\nc{\La}{\Lambda}
\nc{\gl}{\widehat{\mathfrak{gl}_2}}
\nc{\bi}{\bibitem}
\nc{\om}{\omega}
\nc{\Res}{\on{Res}}
\nc{\gm}{\gamma}
\nc{\Om}{\Omega}
\nc{\yy}{{\bs y}}
\nc{\kk}{{\bs k}}

\def\Res{\on{Res}}

\def\B{{\mc B}}

\def\F{{\mc F}}

\def\V{{\mc V}}

\let\Dl\Delta

\let\Si\Sigma

\let\geq\geqslant

\let\leq\leqslant

\nc{\gln}{\mathfrak{gl}_N}
\nc{\sln}{\mathfrak{sl}_N}

\def\beq{\begin{equation}}
\def\eeq{\end{equation}}
\def\be{\begin{equation*}}
\def\ee{\end{equation*}}

\nc{\bean}{\begin{eqnarray}}
\nc{\eean}{\end{eqnarray}}
\nc{\bea}{\begin{eqnarray*}}
\nc{\eea}{\end{eqnarray*}}
\nc{\bs}{\boldsymbol}
\nc{\Ref}[1]{{\rm(\ref{#1})}}

\nc{\R}{\Bbb R}
\nc{\glN}{\mathfrak{gl}_N}
\nc{\glNt}{\mathfrak{gl}_N[t]}
\nc{\s}{sing}

\nc{\Oml}{{\Om_{\bs\la}}}
\nc{\OmLb}{{\Om_{\bs\La,\bs\la,\bs b}}}
\nc{\Ol}{{\mc O_{\bs\la}}}
\nc{\OLb}{{\mc O_{\bs\La,\bs\la,\bs b}}}
\nc{\Ml}{{\mc M_{\bs\la}}}
\nc{\Mlb}{{\mc M_{\bs\La,\bs\la,\bs b}}}
\nc{\Blb}{{\B_{\bs\La,\bs\la,\bs b}}}
\nc{\Omn}{{\Omega_{\bs n,\bs b,\bs K}}}
\nc{\Omlb}{{\bar\Om_{\bs\la}}}
\nc{\VSl}{{(\V^S)_{\bs\la}}}

\nc{\Dlb}{\Dl_{\bs\La,\bs\la,\bs b,\bs K}}
\nc{\ep}{\epsilon}
\nc{\Vn}{{V^{\otimes n}}}
\nc{\Il}{{\mc I_{\bs\la}}}
\nc{\bla}{{\bs\la}}
\nc{\Fla}{\F_{\bs\la}}
\nc{\GL}{{GL_n(\C)}}
\nc{\ga}{\gamma}
\nc{\Ga}{\Gamma}

\nc{\Nn}{{\mc N}}
\nc\Ll{{\mc L}}

\nc{\PCN}{{   (\C[x])^2   }}
\nc{\slt}{{\frak{sl}_3}}
\nc\ad{{\on{ad}}}
\nc\gA{{\g(A_2^{(1)})}}
\nc\At{{A_2^{(1)}}}
\nc\Dia{{\on{Diag}}}
\nc\8{\emptyset}
\nc\AT{{A^{(2)}_2}}
\nc{\PCr}{{ \bs P  (\C[x])^r   }}

\begin{document}
\title[On the number of populations of critical points]
{On the number of populations of critical points of master functions}

\author[{}]{ Evgeny Mukhin
${}^{*,1}$
\and Alexander Varchenko${}^{**,2}$}
\thanks{${}^1$ Supported in part by NSF grant DMS-0900984}

\thanks{${}^2$ Supported in part by NSF grant DMS-1101508}

\maketitle
\medskip \centerline{\it ${}^*$
Department of Mathematical Sciences,}
\centerline{\it Indiana University
Purdue University Indianapolis,}
\centerline{\it 402 North Blackford St., Indianapolis,
IN 46202-3216, USA}
 \medskip
\centerline{\it ${}^{**}$Department of Mathematics, University of
  North Carolina at Chapel Hill,} \centerline{\it Chapel Hill, NC
  27599-3250, USA} \medskip

\begin{abstract}
We consider the master functions associated with one irreducible integrable highest weight representation of a Kac-Moody algebra.
We study the generation procedure of new critical points from a given critical point of one of these master functions.
We show that all critical points of all these master functions can be generated from
the critical point of the master function with no variables. In particular this means that the set of all critical points of all these master functions
form  a single population of critical points.

We formulate a conjecture that  the number of populations of critical points of master functions associated with
a tensor product of irreducible integrable highest weight representations of a Kac-Moody algebra are labeled by homomorphisms to $\C$
of the Bethe algebra of the Gaudin model associated with this tensor product.

\end{abstract}



\section{Introduction}
We consider the master functions associated with one irreducible integrable highest weight representation of a Kac-Moody algebra.
We study the generation procedure of new critical points from a given critical point of one of these master functions.
We show that all critical points of all these master functions can be generated from
the critical point of the master function with no variables. In particular this means that the set of all critical points of all these master functions
form  a single population of critical points.

We formulate a conjecture that  the number of populations of critical points of master functions associated with
a tensor product of irreducible integrable highest weight representations of a Kac-Moody algebra are labeled by homomorphisms to $\C$
of the Bethe algebra of the Gaudin model associated with this tensor product.

In Section \ref{crit point sec} we introduce master functions and describe different ways to characterize critical points of master functions.
In Section \ref{Populations of critical points}, we introduce populations of critical points and formulate the conjecture. The main results are
Theorems \ref{thm c=0} and \ref{thm n=1}.

The second author thanks the Max Planck Institute for Mathematics in Bonn for hospitality.

\section{Master functions and critical points, \cite{MV1}}\label{crit pts}
\label{crit point sec}

\subsection{Kac-Moody algebras}
\label{Kac_Moody sec}
Let $A=(a_{i,j})_{i,j=1}^r$ be a generalized  Cartan matrix,
$a_{i,i}=2$,
$a_{i,j}=0$ if and only $a_{j,i}=0$,
 $a_{i,j}\in \Z_{\leq 0}$ if $i\ne j$.
We  assume that $A$ is symmetrizable, i.e.
there exists a diagonal matrix $B=\on{diag}(b_1,\dots,b_r)$
with positive integers $b_i$ such that $BA$
is symmetric.

Let $\g=\g(A)$ be the corresponding complex Kac-Moody
Lie algebra (see \cite[Section 1.2]{K}),
$\h \subset \g$  the Cartan subalgebra.
The associated symmetric bilinear form  $(\,,\,)$ is nondegenerate on $\h^*$ and
$\dim \h = r + 2d$, where $d$ is the dimension of the kernel of the Cartan matrix $A$,
see \cite[Chapter 2]{K}.

Let $\al_i\in \h^*$, $\al_i^\vee\in \h$, $i = 1, \dots , r$, be the sets of simple roots,
coroots, respectively. We have
$ (\al_i,\al_j)=  b_i a_{i,j},$  $\langle\la ,\al^\vee_i\rangle =
2(\la,\al_i)/{(\al_i,\al_i)}$,\ $ \la\in\h^*.$
In particular, $\langle\al_j ,\al^\vee_i \rangle = a_{i,j}$.
Let $ P = \{ \lambda \in \h^* \, |\, \langle\la, \al^\vee_i\rangle \in \Z\}$ and
$P^+ = \{ \lambda \in \h^* \, |\, \langle\la
,\al^\vee_i\rangle \in \Z_{\geq 0}\}$ be the sets of integral and
dominant integral weights, respectively.

Fix $\rho\in\h^*$ such that $\langle\rho,\al_i^\vee\rangle=1$,
$i=1,\dots,r$. We have $(\rho,\al_i)= (\al_i,\al_i)/2$.
The Weyl group $\mathcal W\in\on{End (\h^*)}$ is generated by
reflections $s_i$, $i=1,\dots,r$, where
$s_i(\la)=\la-\langle\la,\al_i^\vee\rangle\al_i,$ $ \la\in\h^*.$
We use the notation
\bean
\label{shifted}
w\cdot\la=w(\la+\rho)-\rho,\qquad w\in \mathcal W,\;\la\in\h^*,
\eean
for the {\it shifted action} of the Weyl group.


\subsection{Master functions, \cite{SV}}
\label{master sec}
Let $\bs\La=(\La_1, \dots , \La_n)$, $\La_a\in P^+$ be
a collection of dominant integral weights.
Let $\bs k=(k_1,\dots,k_r)\in\Z^r_{\geq 0}$ be
a collection of nonnegative integers. Denote $k=k_1+\dots+k_r$.
Denote $\La_\infty(\bs\La,\bs k) = \sum_{a=1}^n \La_a  -  \sum_{j=1}^r
k_j\al_j \in  P$.
The weight $\La_\infty(\bs\La,\bs k)$ will be called the {\it weight at infinity}.

Consider $\C^n$ with coordinates $\bs z=(z_1,\dots,z_n)$.
Consider $\C^k$ with coordinates $\bs u$ collected into $r$ groups, the $j$-th group consisting of $k_j$ variables,
$\bs u=(u^{(1)},\dots,u^{(r)}),$ $u^{(j)} = (u^{(j)}_1,\dots,u^{(j)}_{k_j})$.
The {\it master function} is the multivalued function on $\C^k\times\C^n$ defined by the formula
\bean
\label{MA general}
&&
\Phi(\bs u,\bs z,\bs\La,\bs k) = \sum_{a<b} (\La_a,\La_b) \ln (z_a-z_b)
-  \sum_{a,i,j} (\al_j,\La_a)\ln (u^{(j)}_i-z_a) +
\\
&&
\phantom{aaa}
+ \sum_{j< j'} \sum_{i,i'} (\al_j,\al_{j'})
\ln (u^{(j)}_i-u^{(j')}_{i'})
+  \sum_{j} \sum_{i<i'} (\al_j,\al_{j})
\ln (u^{(j)}_i-u^{(j)}_{i'}),
\notag
\eean
with singularities at the places where the arguments of the logarithms are equal to zero.
The product of symmetric groups
$\Sigma_{\bs k}=\Si_{k_1}\times\dots\times \Si_{k_r}$ acts on the set of variables $u^{(j)}_i$
by permuting the coordinates with the same upper index.
The  function $\Phi$ is symmetric with
respect to the $\Si_{\bs k}$-action.

For $q\in\C^n$, a point $p\in\C^k$ is a {\it critical point}
if $\Phi(\cdot,z(q),\bs\La,\bs k)$ is defined at $p$ and $d_u\Phi(\cdot,z(q),\bs\La,\bs k)=0$ at $p$.
In other words, $p$ is a critical point if
\bean
\label{Bethe eqn 1}
\sum_{i',\, i' \neq i} \frac {(\al_j,\al_j)}{ u_i^{(j)}(p) - u_{i'}^{(j)}(p)}
+ \sum_{j',i'} \frac {(\al_j,\al_{j'})}{ u_i^{(j)}(p) - u_{i'}^{(j')}(p)}-\sum_{a} \frac {(\al_j,\La_a)}{ u_i^{(j)}(p) - z_a(q)} = 0,
\eean
$j=1,\dots,r,\,i=1,\dots,k_j$.
The critical set of the function $\Phi(\cdot,\bs z(q),\bs\La,\bs k)$ is $\Si_{\bs k}$-invariant.
All orbits in the critical set have the same cardinality $k_1!\dots k_r!$ .
We do not make distinction between critical points in the same orbit.

\subsection{ Polynomials representing critical points }
\label{PLCP}

Let $q\in\C^n$.
Let $p\in\C^k$ be a critical point of the master function
$\Phi(\cdot,\bs z(q), \bs\La,\bs k)$.
Introduce an $r$-tuple of polynomials $\bs y=( y_1(x),$ $\dots ,$ $ y_r(x))$,
$y_j(x) = \prod_{i=1}^{k_j}(x-u_i^{(j)}(p))$.
Each polynomial is considered up to multiplication
by a nonzero number.
The tuple defines a point in the direct product
$\PCr$ of $r$ copies of the projective space associated with the vector
space of polynomials in $x$.
We say that the tuple $\bs y$ {\it represents the
critical point} $p$.
The vector $\bs k=(k_1,\dots,k_r)$ will be called the {\it degree vector} of the tuple $\bs y$.

It is convenient to think that the tuple $(1, \dots , 1)$ of constant polynomials
represents  in $\PCr$ the critical point of
the master function with no variables.
This corresponds to the degree vector $\bs k = (0, \dots , 0)$.

Introduce polynomials
\bean
\label{T}
{T}_j(x)=\prod_{a=1}^n(x-z_a(q))^{\langle \La_a, \al_j^\vee\rangle}, \qquad j = 1, \dots , r .
\eean
Denote $\tau_j=\deg T_j(x)$.
We say that a given tuple $\bs y\in\PCr$ is {\it generic} with respect
to a point $q\in\C^n$ and weights $\bs\La$
if: \ (i) each polynomial $y_j(x)$ has no multiple roots;\ (ii) all roots of $y_j(x)$ are different from roots of the polynomial $T_j$;
\ (iii) any two polynomials $y_i(x)$, $y_j(x)$ have no common roots if $i\neq j$ and
$a_{i,j}\neq 0$.
It is clear that if a tuple represents a critical point, then it is generic, see equations \Ref{Bethe eqn 1}.

Denote $W(f,g)=fg'-f'g$  the Wronskian determinant of functions $f, g$ in $x$.
A tuple $\bs y$  is called {\it fertile} with respect to weights $\bs\La$ and a point $q\in\C^n$,
  if there exist polynomials $\tilde y_1(x),\dots,\tilde y_r(x)$  satisfying the equations
\bean
\label{wronskian-critical eqn}
W( y_j, \tilde y_j) =   T_j  \prod_{i, \,i\neq j}y_i^{- a_{j,i}}, \qquad j=1,\dots, r,
\eean
see \cite{MV1}.
Equation \Ref{wronskian-critical eqn}  is a first order linear inhomogeneous differential equation with respect to $\tilde y_j$.
Its solutions are
\bean
\label{deG 0}
\tilde y_j = y_j \int T_j \prod_{i=1}^r y_i^{- a_{j,i}}  dx + c y_j,
\eean
where $c$ is any number.
The tuples
\bean
\label{simple 0}
\bs y^{(j)}(x,c) = (y_1(x),\dots,y_{j-1}(x),\tilde y_j(x,c), y_{j+1}(x),\dots,y_r(x))
 \in   \PCr \
\eean
 form a one-parameter family.  This family  is called
 the {\it generation  of tuples  from $\bs y$ in the $j$-th direction}.
 A tuple  of this family is called an {\it immediate descendant} of $\bs y$ in the $j$-th direction.

\begin{thm}
[\cite{MV1}]
\label{fertile cor}
${}$

\begin{enumerate}
\item[(i)]
A generic tuple $\bs y = (y_1,\dots, y_r)$
represents a critical point of the master function $\Phi(\cdot,\bs z(q), \bs\La,\bs k)$, where
$k_j=\deg y_j$,
if and only if $\bs y$ is fertile with respect to  weights  $\bs\La$  and a point
$q\in\C^n$

\item[(ii)] If $\bs y$ represents a critical point,
then for any $c\in\C$ the tuples $\bs y^{(j)}(x,c)$, $j=1,\dots,r$,  are fertile.

\item[(iii)]
If $\bs y$ is generic and fertile, then for almost all values of the parameter
 $c\in \C$ the tuples $\bs y^{(j)}(x,c)$, $j=1,\dots,r$,  are  generic.
The exceptions form a finite set in $\C$.

\item[(iv)]

Assume that a sequence ${\bs y}_i, i = 1,2,\dots$, of fertile tuples
has a limit ${\bs y}_\infty$ in $\PCr$ as $i$ tends to infinity.
\begin{enumerate}
\item[(a)]
Then the limiting tuple ${\bs y}_\infty$ is fertile.
\item[(b)]
For $j = 1,\dots,r$, let ${\bs y}_\infty^{(j)}$ be an immediate
descendant of ${\bs y}_\infty$.
Then for $j=1,\dots,r$, there exist immediate descendants
${\bs y}_i^{(j)}$ of $y_i$ such that ${\bs y}_\infty^{(j)}$
is the limit of ${\bs y}_i^{(j)}$ as $i$ tends to infinity.

\end{enumerate}

\end{enumerate}
\end{thm}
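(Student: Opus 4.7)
The plan is to reduce each of the four parts to a residue analysis of the first-order linear ODE \Ref{wronskian-critical eqn}. For (i), I would rewrite \Ref{wronskian-critical eqn} as $(\tilde y_j/y_j)' = f_j$ with $f_j := T_j y_j^{-2}\prod_{l\ne j}y_l^{-a_{j,l}}$. A polynomial solution $\tilde y_j$ exists iff $f_j$ has vanishing residue at every pole, and by genericity of $\bs y$ the poles of $f_j$ are only double poles at the roots $u_i^{(j)}$ of $y_j$. Expanding $f_j$ near $u = u_i^{(j)}$ identifies the residue obstruction with $g'(u)$, where $g := T_j(y_j^*)^{-2}\prod_{l\ne j}y_l^{-a_{j,l}}$ and $y_j^* := y_j/(x-u)$. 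A direct logarithmic-derivative computation, combined with the identities $b_j = (\al_j,\al_j)/2$, $(\al_j,\al_l) = b_j a_{j,l}$, and $(\al_j,\La_a) = (\al_j,\al_j)\langle\La_a,\al_j^\vee\rangle/2$, shows that $-b_j\,g'(u)/g(u)$ is exactly the left-hand side of the Bethe equation \Ref{Bethe eqn 1} at the index $(j,i)$, giving the equivalence in (i).

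For (ii), fertility of $\bs y^{(j)}(x,c)$ in the $j$-th direction is immediate, since $W(\tilde y_j,-y_j) = W(y_j,\tilde y_j)$, so $-y_j$ serves as a Wronskian companion for $\tilde y_j$. For a direction $i\ne j$, I would analyze residues at roots $v$ of $y_i$ of the new integrand, obtained from the old one by inserting the factor $(\tilde y_j/y_j)^{-a_{i,j}}$. If $a_{i,j}=0$ the factor is trivial; if $a_{i,j}\ne 0$ then also $a_{j,i}\ne 0$, and the Wronskian identity yields $(\tilde y_j/y_j)'(v) = T_j(v)y_j(v)^{-2}\prod_{l\ne j}y_l(v)^{-a_{j,l}} = 0$, since the product contains $y_i(v)^{-a_{j,i}} = 0$. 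A short case analysis finishes the proof: when $\tilde y_j(v)\ne 0$, the vanishing of $(\ln(\tilde y_j/y_j))'(v)$ ensures that the new residue vanishes whenever the old one does, which holds by \Ref{Bethe eqn 1}; when $\tilde y_j(v)=0$ (for a single special value of $c$), the same identity forces $\tilde y_j'(v)=0$, so $\tilde y_j$ has a double zero at $v$ and the factor $\tilde y_j^{-a_{i,j}}$ has a zero of order $\ge 2$ there, annihilating the residue outright.

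For (iii), each genericity condition on $\bs y^{(j)}(x,c)$ — distinctness of roots of $\tilde y_j(x,c)$, and disjointness of these roots from roots of $T_j$ and from roots of $y_l$ with $a_{j,l}\ne 0$ — is a non-vanishing condition on an appropriate discriminant or resultant, hence a polynomial inequality in $c$; none is identically zero, as can be seen by letting $c\to\infty$, where $\tilde y_j(x,c)\sim c\,y_j(x)$ is generic by hypothesis on $\bs y$. For (iv), fertility is a closed condition in $\PCr$ because \Ref{wronskian-critical eqn} is algebraic in the coefficients of the $y_l$ and their companions, and the projective compactness of $\PCr$ supplies a convergent subsequence of companions in the limit, giving (a); part (b) follows from the algebraic, hence continuous, dependence of $(c,\bs y)\mapsto\bs y^{(j)}(\cdot,c)$ on its arguments, namely given $\bs y_\infty^{(j)} = \bs y_\infty^{(j)}(\cdot,c_\infty)$, any sequence $c_i\to c_\infty$ yields the desired convergent descendants $\bs y_i^{(j)}(\cdot,c_i)$. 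The main technical obstacle is the residue-to-Bethe-equation identification of (i), which requires careful bookkeeping of the normalizations $b_j$, $(\al_j,\al_j)$, and coroot pairings, together with the case analysis of (ii) handling the degenerate value of $c$ where $\tilde y_j$ acquires a double root at a root of $y_i$.
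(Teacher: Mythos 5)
The paper itself offers no proof of this theorem; it is imported verbatim from \cite{MV1}, so your reconstruction has to be judged against that source, and in fact it follows the same route. Your part (i) is the standard argument and the bookkeeping is right: $W(y_j,\tilde y_j)=y_j^2(\tilde y_j/y_j)'$, the only possible poles of $f_j$ are double poles at the simple roots of $y_j$ (genericity), the residue at such a root $u$ is $g'(u)$, and $-b_j\,g'/g$ at $u$ reproduces the left-hand side of \Ref{Bethe eqn 1} via $(\al_j,\al_j)=2b_j$, $(\al_j,\al_l)=b_ja_{j,l}$, $(\al_j,\La_a)=b_j\langle\La_a,\al_j^\vee\rangle$; since $g(u)\ne0$ by genericity, the residue vanishes iff the Bethe equation holds. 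Part (ii), including the observation that $(\tilde y_j/y_j)'(v)=0$ at every root $v$ of $y_i$ with $a_{i,j}\ne0$ and the degenerate case $\tilde y_j(v)=0$ forcing a double zero, is correct. In (iii) your appeal to $c\to\infty$ is slightly loose when $\deg\tilde y_j(x,0)>\deg y_j$ (the top coefficients do not degenerate to those of $cy_j$); the clean justification for the discriminant condition is to eliminate $c$ from $\tilde y_j(x_0,c)=\tilde y_j'(x_0,c)=0$, which forces $W(y_j,\tilde y_j(\cdot,0))(x_0)=0$, and that Wronskian is the nonzero polynomial $T_j\prod_{l\ne j}y_l^{-a_{j,l}}$, so only finitely many $c$ are excluded. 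This is cosmetic.

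The one substantive gap is in (iv)(a). ``Projective compactness supplies a convergent subsequence of companions'' does not by itself close the argument: if you normalize the companions $\tilde y_{j,i}$ to unit norm, the Wronskian identity becomes $W(y_{j,i},\hat y_{j,i})=\|\tilde y_{j,i}\|^{-1}T_j\prod_l y_{l,i}^{-a_{j,l}}$, and if $\|\tilde y_{j,i}\|\to\infty$ the limit equation degenerates to $W(y_{j,\infty},\hat y_{j,\infty})=0$, i.e.\ $\hat y_{j,\infty}\in\C y_{j,\infty}$, which establishes nothing about fertility of $\bs y_\infty$. The missing ingredient is that the linear map $\tilde y\mapsto W(y_j,\tilde y)$ on polynomials of bounded degree has kernel exactly $\C y_j$, hence constant rank as $\bs y$ varies in $\PCr$; therefore its image varies continuously (equivalently, the minimal-norm solution transverse to $y_{j,i}$ depends continuously on the data and stays bounded), so solvability of the inhomogeneous equation is a closed condition. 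Note also that the residue criterion from (i) is unavailable here because $\bs y_\infty$ need not be generic. The same continuous choice of particular solution is what makes your one-line argument for (iv)(b) legitimate, since it gives a coherent parametrization $c\mapsto\bs y^{(j)}(\cdot,c)$ across the sequence and its limit. With that observation added, the proof is complete and coincides in substance with the one in \cite{MV1}.
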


Consider a generic fertile tuple $\bs y=(y_1(x),\dots,y_r(x))$ as in Theorem \ref{fertile cor}. Let $k_j=\deg y_j$
for $j=1,\dots,r$.
Consider a generic fertile tuple $\bs y^{(j)}(x,c)= (y_1(x),\dots,y_{j-1}(x),$ $\tilde y_j(x,c),$ $ y_{j+1}(x),\dots,y_r(x))$
for some $c\in \C$ and some $j,\, 1\leq j\leq r$, as in part (iii) of Theorem \ref{fertile cor}. It is easy to see that the  polynomial
$\tilde y_j(x,c)$ is of degree $k_j$ or $\tilde k_j = \tau_j + 1-k_j - \sum_{i\ne j} a_{j,i}k_i$.
Denote
\bea
\bs k^{(j)} = (k_1,\dots,k_{j-1},\tilde k_j,k_{j+1}\dots,k_r).
\eea
By Theorem  \ref{fertile cor}, if $\deg \tilde y_j(x,c) = k_j$, then the generic fertile tuple  $\bs y^{(j)}(x,c)$
represents a critical point
of the master function $\Phi(\cdot,\bs z(q), \bs\La, \bs k)$.  By Theorem  \ref{fertile cor},
if $\deg \tilde y_j(x,c) = \tilde k_j$, then the generic fertile tuple  $\bs y^{(j)}(x,c)$ represents a critical point
of the master function $\Phi(\cdot,\bs z(q),$ $ \bs\La, \bs k^{(j)})$.

The transformation from $\bs k$ to $\bs k^{(j)}$ can be described in terms of the shifted Weyl group action.

\begin{lem}
[\cite{MV1}]
We have $\La_\infty(\bs \La,\bs k^{(j)}) = s_j\cdot \La_\infty(\bs\La,\bs k)$.
\end{lem}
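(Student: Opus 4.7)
The plan is a direct computation: expand both sides as elements of $\h^*$ and verify they coincide. The key identities to have at hand are the definition $\La_\infty(\bs\La,\bs k) = \sum_a \La_a - \sum_i k_i \al_i$, the formula $\tau_j = \sum_a \langle\La_a,\al_j^\vee\rangle$ for the degree of $T_j$, the paring $\langle \al_i,\al_j^\vee\rangle = a_{j,i}$ (with $a_{j,j}=2$), and the normalization $\langle\rho,\al_j^\vee\rangle = 1$.

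First I would rewrite the shifted action concretely. Since $s_j(\la) = \la - \langle\la,\al_j^\vee\rangle\al_j$ and $\langle\rho,\al_j^\vee\rangle=1$, the definition \eqref{shifted} gives
\[
s_j\cdot \la \;=\; \la \;-\; \bigl(\langle \la,\al_j^\vee\rangle + 1\bigr)\,\al_j.
\]
Applying this to $\la = \La_\infty(\bs\La,\bs k)$ and using $\langle\al_i,\al_j^\vee\rangle = a_{j,i}$, I obtain
\[
s_j\cdot \La_\infty(\bs\La,\bs k) - \La_\infty(\bs\La,\bs k) \;=\; -\Bigl(\tau_j - \sum_{i} a_{j,i} k_i + 1\Bigr)\al_j.
\]

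Next I would compute the left-hand difference directly from the definition of $\bs k^{(j)}$:
\[
\La_\infty(\bs\La,\bs k^{(j)}) - \La_\infty(\bs\La,\bs k) \;=\; (k_j - \tilde k_j)\,\al_j.
\]
Substituting $\tilde k_j = \tau_j + 1 - k_j - \sum_{i\ne j} a_{j,i}k_i$ and using $a_{j,j}=2$,
\[
k_j - \tilde k_j \;=\; 2k_j - \tau_j - 1 + \sum_{i\ne j} a_{j,i}k_i \;=\; -\tau_j - 1 + \sum_{i} a_{j,i}k_i.
\]
This matches the coefficient of $\al_j$ computed above, finishing the proof.

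There is no real obstacle here; the statement is a direct bookkeeping check, with the only place to be careful being the bookkeeping of the diagonal term $a_{j,j} = 2$ when combining $-k_j\al_j + k_j\al_j$ contributions and the $-\tilde k_j\al_j$ contribution. The substantive content is that the definition of $\tilde k_j$ coming from the Wronskian equation \eqref{wronskian-critical eqn} matches exactly the shift in simple root coefficients dictated by the shifted Weyl reflection $s_j$ — this is of course the reason generation is compatible with the Weyl group action, but the verification itself is purely algebraic.
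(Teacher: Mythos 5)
Your computation is correct and complete: the identity $s_j\cdot\la = \la - (\langle\la,\al_j^\vee\rangle+1)\al_j$, the evaluation $\langle\La_\infty(\bs\La,\bs k),\al_j^\vee\rangle = \tau_j - \sum_i a_{j,i}k_i$, and the cancellation via $a_{j,j}=2$ all check out, and this is exactly the direct verification one finds for this lemma in \cite{MV1} (the present paper states it without proof). Nothing to add.
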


\subsection{Another way to characterize critical points}

\begin{thm}
[\cite{MSTV}]
\label{new BAE}
Let $\bs y=(y_1,\dots,y_r)$ be generic   with respect
to a point $q\in\C^n$ and weights $\bs\La$. Then $\bs y$
 represents a critical point of the master function
$\Phi(\cdot,\bs z(q); \bs\La,\bs k)$
if and only if there exist numbers $\mu_1,\dots,\mu_n$, $\mu_1+\dots+\mu_n=0$, such that
\bean
\label{new BAE}
&&
\sum_{j=1}^r(\al_j,\al_j)\frac{y_j''}{y_j} + \sum_{i\ne j}(\al_i,\al_j)\frac{y_i'y_j'}{y_iy_j}
-\sum_{j=1}^r(\al_j,\al_j)\frac{T'_jy_j'}{T_jy_j}+
\\
\notag
&&
\phantom{aaaaaaaaaaaaaaaaaa}
+ \sum_{a=1}^n\frac{1}{x-z_a(q)}(\mu_a - \sum_{b\ne a}\frac{(\La_a,\La_b)}{z_a(q)-z_b(q)}) = 0 .
\eean

\end{thm}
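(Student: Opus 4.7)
The plan is to treat the displayed equation as an identity between rational functions of $x$ and prove it by matching residues at the finite poles and analyzing behavior at infinity. Let $F(x)$ denote the entire left hand side. By the genericity of $\bs y$ (no multiple roots in $y_j$; no common roots between $y_j$ and $y_{j'}$ when $a_{j,j'}\ne 0$; no common roots with $T_j$), the finite singularities of $F$ are simple poles, located only at the roots $u_i^{(j)}$ of the $y_j$ and at the $z_a$.

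I first compute $\Res_{x=u_i^{(j)}} F$. Writing $y_j(x)=(x-u_i^{(j)})g(x)$ with $g(u_i^{(j)})\ne 0$, the residue of $(\al_j,\al_j)y_j''/y_j$ is $2(\al_j,\al_j)\sum_{i'\ne i}1/(u_i^{(j)}-u_{i'}^{(j)})$; by genericity the cross terms $(\al_j,\al_{j'})y_j'y_{j'}'/(y_jy_{j'})$ with $j'\ne j$ contribute $2\sum_{j'\ne j,\,i'}(\al_j,\al_{j'})/(u_i^{(j)}-u_{i'}^{(j')})$; and, since $T_j$ is regular at $u_i^{(j)}$, the term $-(\al_j,\al_j)T_j'y_j'/(T_jy_j)$ contributes $-2\sum_a(\La_a,\al_j)/(u_i^{(j)}-z_a)$. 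The last sum in $F$ is regular at $u_i^{(j)}$. Hence $\Res_{x=u_i^{(j)}}F$ is exactly twice the left hand side of the Bethe equation (1.4), and because $(\al_j,\al_j)>0$ this residue vanishes at every $u_i^{(j)}$ iff $\bs y$ represents a critical point.

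Next I compute $\Res_{x=z_a}F$. Only $-(\al_j,\al_j)T_j'y_j'/(T_jy_j)$ and the final sum contribute. Using $\Res_{x=z_a}T_j'/T_j=\langle\La_a,\al_j^\vee\rangle$ and $(\al_j,\al_j)\langle\La_a,\al_j^\vee\rangle=2(\La_a,\al_j)$, the total residue is
\[
\mu_a - \sum_{b\ne a}\frac{(\La_a,\La_b)}{z_a-z_b} - 2\sum_{j,i}\frac{(\La_a,\al_j)}{z_a-u_i^{(j)}}.
\]
Thus regularity at $z_a$ determines $\mu_a$ uniquely in terms of $\bs y,\bs z(q),\bs\La$. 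Finally, expanding at infinity, each of the first three sums in $F$ is $O(1/x^2)$, while the last sum contributes $(\sum_a\mu_a-\sum_a\sum_{b\ne a}(\La_a,\La_b)/(z_a-z_b))/x$ plus $O(1/x^2)$; the double sum vanishes by antisymmetry $a\leftrightarrow b$, so the $1/x$ coefficient is $\sum_a\mu_a$. Hence $F(x)=O(1/x^2)$ at infinity iff $\sum_a\mu_a=0$.

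Combining these three residue computations proves the theorem. For $(\Leftarrow)$, $F\equiv 0$ forces vanishing of every residue at every $u_i^{(j)}$, which is exactly (1.4). For $(\Rightarrow)$, starting from a critical point define $\mu_a$ so as to kill $\Res_{x=z_a}F$; to verify the constraint $\sum_a\mu_a=0$, substitute the Bethe equation (1.4) into the resulting expression for $\sum_a\mu_a=-2\sum_{j,i}\sum_a(\al_j,\La_a)/(u_i^{(j)}-z_a)$ and observe that both resulting double sums vanish by the antisymmetries $\sum_{i\ne i'}1/(u_i^{(j)}-u_{i'}^{(j)})=0$ and $(j,i)\leftrightarrow(j',i')$ on the cross-term sum. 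Since $F$ is a rational function with no finite poles and vanishing at infinity, it is identically zero. I expect the only real obstacle to be careful bookkeeping of the factors of $2$ (arising from the symmetric double sum $\sum_{i\ne j}$), of the normalization $\langle\La_a,\al_j^\vee\rangle$ versus $(\La_a,\al_j)$, and of the asymptotics at infinity; once these are tracked correctly, the theorem reduces to the three residue identities above.
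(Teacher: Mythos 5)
The paper does not actually prove this theorem; it is imported from [MSTV] with only a citation, so there is no internal proof to compare against. Your residue argument is the standard (and, as far as I can tell, the intended) proof, and your bookkeeping is right: at a root $u_i^{(j)}$ of $y_j$ the three singular terms contribute exactly twice the three sums in the Bethe equation \Ref{Bethe eqn 1} (the cross-term sum over ordered pairs supplies one factor of $2$, and $(\al_j,\al_j)\langle\La_a,\al_j^\vee\rangle=2(\La_a,\al_j)$ supplies the other), at $z_a$ the residue is $\mu_a-\sum_{b\ne a}(\La_a,\La_b)/(z_a-z_b)-2\sum_{j,i}(\La_a,\al_j)/(z_a-u_i^{(j)})$, and the $1/x$ coefficient at infinity is $\sum_a\mu_a$ (which, as you note, can also be seen to vanish automatically from the residue theorem once all finite residues vanish, so your separate verification via the antisymmetries is a consistency check rather than an independent requirement).

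The one place where the argument as written is incomplete is the implicit assumption that the finite poles of the various terms of $F$ are all at \emph{distinct} points, so that killing the residue of $F$ at a point kills a single Bethe equation. Genericity as defined in the paper only forbids common roots of $y_i$ and $y_j$ when $a_{i,j}\ne 0$, and only forbids roots of $y_j$ at roots of $T_j$; so when $a_{i,j}=0$ a root of $y_i$ may coincide with a root of $y_j$, and when $\langle\La_a,\al_j^\vee\rangle=0$ a root of $y_j$ may sit at $z_a$. In such a configuration the residue of $F$ at the coincident point is the \emph{sum} of two (or more) Bethe-equation left-hand sides, and its vanishing does not by itself yield each equation separately; so the direction ``$F\equiv 0\Rightarrow$ critical point'' needs an extra word there (or the statement should be read with the stronger genericity that all these points are pairwise distinct, which is the situation in [MSTV], where $\g=\frak{sl}_3$ and no off-diagonal Cartan entry vanishes). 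Apart from this edge case, the proof is correct and complete.
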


\begin{lem}
[\cite{MSTV}]
\label{new desc}
Let $\bs y=(y_1,\dots,y_r)$ be generic  and fertile with respect
to a point $q\in\C^n$ and weights $\bs\La$. Let $j\in\{1,\dots,r\}$.
Let $\bs y^{(j)}(x,c)=(y_1(x),\dots,\tilde y_j(x,c),\dots,y_r(x))$ be
a generic fertile descendant in the $j$-th direction.
Then the tuple  $\bs y^{(j)}(x,c)$ satisfies equation \Ref{new BAE} with the same numbers $\mu_1,\dots,\mu_n$
as the tuple $\bs y$.

\end{lem}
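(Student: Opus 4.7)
\emph{Plan.} Equation \Ref{new BAE} is an identity of rational functions of $x$ that may be written $F(\bs y) + R(\bs\mu, q, \bs\La) = 0$, where
$$F(\bs y) := \sum_{j=1}^r(\al_j,\al_j)\frac{y_j''}{y_j} + \sum_{i\ne j}(\al_i,\al_j)\frac{y_i'y_j'}{y_iy_j} - \sum_{j=1}^r(\al_j,\al_j)\frac{T_j'y_j'}{T_jy_j}$$
collects the $\bs y$-dependent terms and $R$ is the $\bs\mu$-sum. Since $R$ has simple poles at the points $z_a(q)$ with residues depending injectively on $\mu_a$, the numbers $\mu_a$ are uniquely recoverable from $F(\bs y)$. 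Thus the lemma reduces to the assertion $F(\bs y^{(j)}) = F(\bs y)$.

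Because $\bs y$ and $\bs y^{(j)}$ agree outside position $j$, only the $y_j$-dependent terms of $F$ change. Collect them into
$$G_j(f) := (\al_j,\al_j)\frac{f''}{f} + 2\sum_{i\ne j}(\al_i,\al_j)\frac{y_i'f'}{y_if} - (\al_j,\al_j)\frac{T_j'f'}{T_jf},$$
where the factor $2$ records that the ordered-pair cross sum contributes both the $(i,j)$ and the $(j,i)$ terms. The goal becomes $G_j(y_j) = G_j(\tilde y_j)$. Using the Kac-Moody identity $2(\al_i,\al_j)/(\al_j,\al_j) = a_{j,i}$ and introducing $H_j := T_j\prod_{i\ne j}y_i^{-a_{j,i}}$ (a polynomial, since $a_{j,i}\le 0$ for $i\ne j$; it is the right-hand side of \Ref{wronskian-critical eqn}), one rewrites
$$G_j(f) = \frac{(\al_j,\al_j)}{f}\Bigl(f'' - \frac{H_j'}{H_j}\,f'\Bigr).$$

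Cross-multiplying $G_j(y_j) = G_j(\tilde y_j)$ and rearranging turns the desired equality into
$$y_j''\tilde y_j - y_j\tilde y_j'' \;=\; \frac{H_j'}{H_j}\bigl(y_j'\tilde y_j - y_j\tilde y_j'\bigr).$$
The left side equals $-W(y_j,\tilde y_j)'$ and the right side equals $-(H_j'/H_j)\,W(y_j,\tilde y_j)$, so the equation reduces to $W(y_j,\tilde y_j)'/W(y_j,\tilde y_j) = H_j'/H_j$, i.e.\ $W(y_j,\tilde y_j)$ and $H_j$ agree up to a multiplicative constant. But \Ref{wronskian-critical eqn} says precisely $W(y_j,\tilde y_j) = H_j$, with that constant equal to $1$, so the identity holds. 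The argument is elementary; no substantive obstacle arises. The content of the lemma is simply that the first-order linear ODE used in \Ref{deG 0} to define $\tilde y_j$ is calibrated exactly so that $y_j$ and $\tilde y_j$ are interchangeable inside the rational function $F$.
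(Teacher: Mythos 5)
Your proof is correct. Note that the paper gives no proof of this lemma at all---it is quoted from \cite{MSTV}---so there is no in-paper argument to compare against; but your computation is the natural one and every step checks: the factor $2$ from the ordered cross sum, the identity $2(\al_i,\al_j)/(\al_j,\al_j)=a_{j,i}$, the rewriting of the $y_j$-dependent part as $\frac{(\al_j,\al_j)}{f}\bigl(f''-\frac{H_j'}{H_j}f'\bigr)$, and the reduction of $G_j(y_j)=G_j(\tilde y_j)$ to $W(y_j,\tilde y_j)'/W(y_j,\tilde y_j)=H_j'/H_j$, which holds because \Ref{wronskian-critical eqn} gives $W(y_j,\tilde y_j)=H_j$ exactly. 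Since $F(\bs y^{(j)})=F(\bs y)$ and the residues at the $z_a(q)$ determine the $\mu_a$ uniquely, the descendant satisfies \Ref{new BAE} with the same numbers, as claimed.
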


\section{Populations of critical points}
\label{Populations of critical points}

\subsection{Populations}

 Let $\bs y=(y_1(x),\dots,y_r(x))\in\PCr$ be the tuple representing
a critical point $p\in\C^k$ of the master function $\Phi(\cdot,\bs z(q), \bs\La, \bs k)$.
By Theorem  \ref{fertile cor}, we can construct $r$ one-parameter families
$\bs y^{(j)}(x,c)\in\PCr$ of fertile tuples almost all of which are generic with respect to the point $q\in\C^n$ and weights $\bs \La$
and hence represent critical points of master functions.
After that we can start with  any tuple $\bs y^{(j)}(x,c)$ of these $r$ families and  generate new $r$ one-parameter
families of fertile tuples by using Theorem \ref{fertile cor}. This two-step procedure gives us
 $r^2$ two-parameter families of fertile tuples in $\PCr$ almost all of which
are generic with respect to the point $q\in\C^n$ and weights $\bs \La$
and hence represent critical points of master functions. We may repeat this procedure any number of times and after, say, $m$ repetitions
 we will obtain $r^m$ families of fertile tuples almost all of which are generic with respect to the point $q\in\C^n$ and weights $\bs \La$
 and hence represent critical points of master functions.
The union in $\PCr$ of all tuples obtained after all possible repetitions is called the {\it population of tuples generated from the tuple}
$\bs y$ with given additional data $q, \bs \La$ (or called the {\it population of critical points generated from the critical point} $p$
of the master function $\Phi(\cdot,\bs z(q); \bs\La, \bs k)$).  All tuples in the population are fertile with respect to the data $q,\bs\La$.
Almost all tuples are generic with respect to the data $q,\bs\La$.

\begin{lem}
[\cite{MV1}]
For given data\ {} $q, \bs \La$, if two populations intersect, then they coincide.
\end{lem}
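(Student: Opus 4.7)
The plan is to show that the immediate-descendant relation is projectively symmetric in $\PCr$; once this is in hand, ``lying in the same population'' is an equivalence relation on fertile tuples, and the lemma becomes the standard fact that equivalence classes are pairwise disjoint or equal.

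The heart of the argument is a one-line Wronskian identity. Suppose $\bs y=(y_1,\dots,y_r)$ is fertile and $\bs y'=(y_1,\dots,y_{j-1},\tilde y_j,y_{j+1},\dots,y_r)$ is one of its immediate descendants in the $j$-th direction, so that equation \Ref{wronskian-critical eqn} gives $W(y_j,\tilde y_j)=T_j\prod_{i\ne j}y_i^{-a_{j,i}}$. Using the antisymmetry of the Wronskian,
\[
W(\tilde y_j,-y_j)\;=\;-W(\tilde y_j,y_j)\;=\;W(y_j,\tilde y_j)\;=\;T_j\prod_{i\ne j}y_i^{-a_{j,i}}.
\]
Hence the polynomial $-y_j$ satisfies the Wronskian equation \Ref{wronskian-critical eqn} for $\bs y'$ in the $j$-th slot, so the tuple with $j$-th component $-y_j$ and all other components unchanged is an immediate descendant of $\bs y'$. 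But $-y_j$ represents the same point of $\PCr$ as $y_j$, so $\bs y$ itself is an immediate descendant of $\bs y'$ in the $j$-th direction. In particular, $\bs y$ lies in the one-parameter family \Ref{deG 0} generated from $\bs y'$ (at the value of $c$ which corresponds to the antiderivative being $-1/(G+c_0)$, where $G$ is the antiderivative used to produce $\tilde y_j$ from $y_j$).

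The rest is formal. Define a relation $\sim$ on fertile tuples in $\PCr$ by declaring $\bs y\sim\bs y'$ whenever there is a finite chain $\bs y=\bs y_0,\bs y_1,\dots,\bs y_m=\bs y'$ in which each $\bs y_{s+1}$ is an immediate descendant of $\bs y_s$ in some direction. The relation is reflexive ($m=0$), transitive (concatenation of chains), and, by the symmetry just established, symmetric; thus $\sim$ is an equivalence relation. Unpacking the definition in Section \ref{Populations of critical points}, the population generated from $\bs y$ is precisely the $\sim$-equivalence class of $\bs y$, so two populations that share even a single tuple must coincide. I do not anticipate any serious obstacle beyond keeping the Wronskian bookkeeping straight and remembering that the argument takes place in $\PCr$, where $-y_j$ and $y_j$ are identified.
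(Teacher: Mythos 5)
Your argument is correct and is essentially the standard proof from \cite{MV1}, which the paper cites for this lemma without reproducing it: the antisymmetry $W(\tilde y_j,-y_j)=W(y_j,\tilde y_j)$ shows that generation in the $j$-th direction is reversible in $\PCr$ (where $-y_j$ and $y_j$ coincide), so that reachability by chains of immediate descendants is an equivalence relation whose classes are exactly the populations. No gaps; your parenthetical identification of the relevant antiderivative $-1/(G+c)$ checks out, since $a_{j,j}=2$ makes the integrand for the reverse step equal to $G'/(G+c)^2$.
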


\begin{lem}
[\cite{MV1}]
\label{Weyl LEM}
Let $\mc P\subset \PCr$ be the population generated from the tuple $\bs y$ representing
a critical point $p\in\C^k$ of the master function $\Phi(\cdot,\bs z(q), \bs\La, \bs k)$.
Let $\tilde{\bs y}=(\tilde y_1,\dots,\tilde y_r)$ be a point of $\mc P$ and
$\tilde{\bs k}=(\tilde k_1,\dots,\tilde k_r)$, where $\tilde k_j=\deg \tilde y_j$.
Then the vector $\La_\infty(\bs\La,\tilde{\bs k})$ lies in the orbit
of the vector $\La_\infty(\bs\La,\bs k)$ under the shifted action of the Weyl group.
Conversely, if a vector $\sum_{a=1}^n\La_a-\sum_{j=1}^r\tilde k_j\al_j$ lies in the orbit of
$\La_\infty(\bs\La,\bs k)$, then there exists a tuple $\tilde{\bs y}\in \mc P$ with degree vector
$(\tilde k_1,\dots,\tilde k_r)$.

\end{lem}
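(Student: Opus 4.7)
\emph{Forward direction.} The plan is to induct on the number $\ell \ge 0$ of generation steps producing $\tilde{\bs y}$ from $\bs y$. The case $\ell = 0$ is trivial. For the inductive step, write $\tilde{\bs y}$ as an immediate descendant in some direction $j$ of a tuple $\hat{\bs y} \in \mc P$ reached in $\ell-1$ steps, with degree vector $\hat{\bs k}$ satisfying $\La_\infty(\bs\La, \hat{\bs k}) = w \cdot \La_\infty(\bs\La, \bs k)$ for some $w \in \mc W$. The degree vector $\tilde{\bs k}$ agrees with $\hat{\bs k}$ outside the $j$-th slot, and in the $j$-th slot equals either $\hat k_j$ (hence $\tilde{\bs k} = \hat{\bs k}$, and there is nothing to prove) or $\hat k_j^{\mathrm{new}} := \tau_j + 1 - \hat k_j - \sum_{i \ne j} a_{j,i} \hat k_i$, in which case $\tilde{\bs k} = \hat{\bs k}^{(j)}$. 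In this second case, the Lemma preceding the statement yields $\La_\infty(\bs\La, \tilde{\bs k}) = s_j \cdot \La_\infty(\bs\La, \hat{\bs k}) = (s_j w) \cdot \La_\infty(\bs\La, \bs k)$, which is still in the orbit.

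\emph{Converse direction.} Here I will induct on the length $\ell(w)$ of a Weyl element $w$ such that $\La_\infty(\bs\La, \tilde{\bs k}) = w \cdot \La_\infty(\bs\La, \bs k)$; the base case $w = 1$ forces $\tilde{\bs k} = \bs k$ by linear independence of the simple roots, and $\tilde{\bs y} = \bs y$ works. For the step, factor $w = s_j w'$ with $\ell(w') = \ell(w) - 1$, and invoke the inductive hypothesis to obtain $\hat{\bs y} \in \mc P$ with $\La_\infty(\bs\La, \hat{\bs k}) = w' \cdot \La_\infty(\bs\La, \bs k)$. If $s_j$ fixes $w' \cdot \La_\infty(\bs\La, \bs k)$ under the shifted action, then $\tilde{\bs k} = \hat{\bs k}$ and $\tilde{\bs y} = \hat{\bs y}$ suffices. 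Otherwise $\hat k_j$ and $\hat k_j^{\mathrm{new}}$ are distinct, and I select the member of the one-parameter family $\hat{\bs y}^{(j)}(x, c)$ from \Ref{deG 0} whose $j$-th polynomial has degree $\hat k_j^{\mathrm{new}}$: any $c$ works when $\hat k_j^{\mathrm{new}} > \hat k_j$, while $c = 0$ works when $\hat k_j^{\mathrm{new}} < \hat k_j$. Setting $\tilde{\bs y}$ to be this descendant gives $\tilde{\bs k} = \hat{\bs k}^{(j)}$ and, by the preceding lemma, $\La_\infty(\bs\La, \tilde{\bs k}) = w \cdot \La_\infty(\bs\La, \bs k)$, closing the induction.

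\emph{Main obstacle.} The chief subtlety is that the intermediate tuple $\hat{\bs y}$ produced in the induction need not be generic with respect to $q, \bs\La$; it is only fertile, so Theorem \ref{fertile cor}(ii) does not apply verbatim to provide immediate descendants. The resolution is to observe that every $\hat{\bs y} \in \mc P$ arises as a limit of generic fertile tuples within the families that generate it, so Theorem \ref{fertile cor}(iv) supplies the required immediate descendants and preserves fertility along the iteration. With this in place, the explicit degree analysis from \Ref{deG 0} applies at every step, and both inductions go through.
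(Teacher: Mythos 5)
The paper gives no proof of this lemma --- it is quoted from \cite{MV1} --- so there is no internal argument to compare against; I will assess your proposal on its own terms. Your forward direction is fine and is the expected one: every member of the population is reached in finitely many generation steps, an immediate descendant in the $j$-th direction has degree vector $\hat{\bs k}$ or $\hat{\bs k}^{(j)}$ (the Wronskian degree count $\deg W(y_j,\tilde y_j)\le \deg y_j+\deg\tilde y_j-1$, with equality unless the degrees coincide, gives this for \emph{any} fertile tuple, not just generic ones), and the preceding lemma converts each step into one shifted reflection. Your closing remark about non-generic intermediate tuples and Theorem \ref{fertile cor}(iv) is also the right way to keep the iteration alive.

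The converse direction has a genuine gap. In the inductive step you apply the inductive hypothesis to the orbit element $w'\cdot\La_\infty(\bs\La,\bs k)$ and immediately speak of ``a tuple $\hat{\bs y}\in\mc P$ with degree vector $\hat{\bs k}$,'' but you never check that the entries of $\hat{\bs k}$ are nonnegative --- and if some $\hat k_j<0$ there is no tuple of polynomials with that degree vector, so the inductive hypothesis produces nothing. This is not a vacuous worry: orbit elements of the form $\sum_a\La_a-\sum_j l_j\al_j$ with some $l_j<0$ do occur in general (e.g.\ for $\frak{sl}_2$ with $\La_1=0$ the shifted orbit of $-2\al_1$ contains $\al_1$, i.e.\ $l_1=-1$), so nonnegativity of the intermediate vectors must come from somewhere. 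The clean fix is to reroute the induction through the dominant chamber: by the minimal-tuple lemma the population contains a tuple $\bs y^{\min}$ whose weight at infinity $\La_\infty^{\min}$ is dominant, hence $\rho+\La_\infty^{\min}$ is dominant regular and every element of its orbit differs from it by a nonnegative integer combination of simple roots; consequently \emph{every} degree vector occurring in the orbit is componentwise $\ge\bs k^{\min}\ge 0$, and moreover, writing $w=s_{j_1}\cdots s_{j_m}$ reduced and generating from $\bs y^{\min}$ along the suffixes $s_{j_l}\cdots s_{j_m}$, each partial product again has nonnegative degree vector. Either you invoke this dominance argument to justify $\hat{\bs k}\ge 0$ at each step of your induction, or you restructure the induction to start at the minimal tuple; as written, the step is incomplete. (A small additional remark: since $\rho+\sum_a\La_a$ is regular dominant, the stabilizer of every orbit element under the shifted action is trivial, so the subcase ``$s_j$ fixes $w'\cdot\La_\infty$'' in your argument never occurs.)
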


\begin{lem}
\label{pop eqn}
Let $\mc P$ be a population associated with data $q,\bs\La$. Let $\bs y \in \mc P$ be a generic fertile tuple.
Then the numbers $\mu_1,\dots,\mu_n$ in equation \Ref{new BAE}
satisfied by $\bs y$  do not depend on the choice of $\bs y$ in $\mc P$.
\end{lem}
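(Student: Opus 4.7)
The plan is to extract a closed rational formula expressing each $\mu_a$ in terms of $\bs y$, verify that this formula is invariant under a single immediate descent, and then propagate the identity along the chains of descents defining $\mc P$.

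The formula is obtained by taking the residue of \Ref{new BAE} at $x = z_a(q)$. The first two sums are regular there by the genericity of $\bs y$, while $T_j'/T_j$ has a simple pole of residue $\langle\La_a,\al_j^\vee\rangle$. Using $(\al_j,\al_j)\langle\La_a,\al_j^\vee\rangle = 2(\La_a,\al_j)$ and matching residues yields
\begin{equation*}
\mu_a \;=\; \sum_{b\ne a}\frac{(\La_a,\La_b)}{z_a(q)-z_b(q)} \;+\; 2\sum_{j=1}^r (\La_a,\al_j)\,\frac{y_j'(z_a(q))}{y_j(z_a(q))}.
\end{equation*}
To check invariance under $\bs y\mapsto \bs y^{(j_0)}(x,c)$ I would evaluate \Ref{wronskian-critical eqn} at $x=z_a(q)$ and divide by $y_{j_0}(z_a(q))\tilde y_{j_0}(z_a(q))$; this expresses the difference $\tilde y_{j_0}'(z_a(q))/\tilde y_{j_0}(z_a(q)) - y_{j_0}'(z_a(q))/y_{j_0}(z_a(q))$ as a quantity proportional to $T_{j_0}(z_a(q))$. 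If $(\La_a,\al_{j_0})>0$ then $T_{j_0}(z_a(q))=0$ by \Ref{T}, and if $(\La_a,\al_{j_0})=0$ the coefficient of the $j_0$-th term in the formula above already vanishes; either way $\mu_a$ is preserved.

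For the chain propagation I would use that \Ref{wronskian-critical eqn} is antisymmetric in $(y_j,\tilde y_j)$ up to an irrelevant sign, so the immediate-descent relation is symmetric on $\PCr$, and that $\mc P$ is the closure of a single tuple under iterated generation. Together these imply that any two tuples $\bs y,\bs y'\in\mc P$ are joined by a finite chain $\bs y=\bs y_0,\dots,\bs y_m=\bs y'$ of immediate descents, and applying the invariance step along this chain yields $\mu[\bs y]=\mu[\bs y']$.

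The main obstacle is that an intermediate tuple $\bs y_k$ in the chain may fail the condition $y_j(z_a(q))\ne 0$ that keeps the closed formula non-singular. To handle this I would invoke Theorem \ref{fertile cor}(iii): the exceptional parameter values in each one-parameter family $\bs y^{(j)}(\cdot,c)$ form a finite set, so a generic joint perturbation of the parameters of the chain yields a nearby chain composed entirely of generic fertile tuples. Lemma \ref{new desc} gives the identity at the perturbed endpoints, and the rational dependence of the formula for $\mu_a$ on $\bs y$ together with Theorem \ref{fertile cor}(iv) lets one pass to the limit and recover $\mu[\bs y']=\mu[\bs y]$.
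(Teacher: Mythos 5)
Your proposal is correct and follows essentially the same route as the paper: the paper's entire proof is the one-line observation that the statement follows from Lemma \ref{new desc} (invariance of $\mu_1,\dots,\mu_n$ under a single generation step), which is exactly the skeleton of your argument, with the chain propagation left implicit. The extra content you supply --- the residue formula $\mu_a=\sum_{b\ne a}(\La_a,\La_b)/(z_a(q)-z_b(q))+2\sum_j(\La_a,\al_j)\,y_j'(z_a(q))/y_j(z_a(q))$ and the Wronskian computation showing its invariance --- is in effect a self-contained proof of the cited Lemma \ref{new desc}, which the paper imports from \cite{MSTV} without proof.
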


\begin{proof} The lemma follows from Lemma \ref{new desc}.
\end{proof}

\begin{rem}
For the Lie algebra $\frak{sl}_2$,  different populations with the same polynomial
$T_1(x)$  have different  sets of numbers  $\mu_1,\dots,\mu_n$.
For the Lie algebra $\frak{sl}_3$, a  population is not uniquely determined by
polynomials $T_1, T_2$ and numbers $\mu_1,\dots,\mu_n$.
For example, let  $(\La_1,\La_2)=(\al_1+\al_2,\al_1+\al_2)$,
$(k_1,k_2)=(1,1)$,  $(z_1,z_2)=(1,-1)$. Then $T_1=T_2=x^2-1$.
The critical point equations
$1/(t-1)+1/(t+1)+1/(t-s)=0,$  $1/(s-1)+1/(s+1)+1/(s-t)=0$ have two solutions
$t=1/\sqrt 5, s=-1/\sqrt 5$ and $t=-1/\sqrt 5, s=1/\sqrt 5$. These solutions generate different populations, see \cite[Section 5]{MV1}.
For the first of them we have $(y_1,y_2) = (x-1/\sqrt 5, x+1/\sqrt 5)$ and for the second
 $(y_1,y_2) = (x+1/\sqrt 5, x-1/\sqrt 5)$. For both of them we have
 \bea
 2 \frac{y_1''}{y_1} + 2\frac{y_2''}{y_2} - 2 \frac{y_1'y_2'}{y_1 y_2}-2\frac{T_1'y_1�}{T_1 y_1} - 2 \frac{T_2' y_2'}{T_2 y_2} +
 \frac {5}{x-1} -\frac{5}{x+1}=0
\eea
and for both of them the numbers $\mu_1,\mu_2$ are the same.

\end{rem}

Given data $q,\bs\La$, let $T_j(x)$ be polynomials defined by \Ref{T}.
Introduce a quadratic polynomial in variables $k_1,\dots,k_r$,
\bean
\label{new BAE form}
B(k_1,\dots,k_r)=
\sum_{j=1}^r(\al_j,\al_j)k_j(k_j-1-\tau_j) + \sum_{i\ne j}(\al_i,\al_j)k_ik_j  .
\eean
We have
\bean
\label{B=( )}
&& B(k_1,\dots,k_r)=
\\
\notag
&&
\phantom{aaa}
=( \rho +\sum_{a=1}^n\La_a-\sum_{j=1}^rk_j\al_j,\ \rho +     \sum_{a=1}^n\La_a -\sum_{j=1}^rk_j\al_j)
 - (\rho+\sum_{a=1}^n\La_a, \ \rho + \sum_{a=1}^n\La_a).
\eean

\begin{lem}
Let $\mc P$ be a population generated from a tuple, which is generic and fertile  with respect to the data $q,\bs \La$.
Then there exists an integer $c(\mc P)$ such that for any $\bs y=(y_1(x),\dots,y_r(x))\in\mc P$
with $\deg y_j = k_j$, we have
\bean
\label{B=c}
B(k_1,\dots,k_r)=c(\mc P).
\eean
\end{lem}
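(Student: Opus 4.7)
The plan is to reduce the claim to the invariance property expressed by formula \Ref{B=( )} combined with Lemma \ref{Weyl LEM}.

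First, I would observe that the right-hand side of \Ref{B=( )} depends on $\bs k$ only through the weight at infinity $\La_\infty(\bs\La,\bs k) = \sum_{a=1}^n \La_a - \sum_{j=1}^r k_j\al_j$. Writing $\La_\infty = \La_\infty(\bs\La,\bs k)$, the identity reads
\[
B(k_1,\dots,k_r) = (\rho+\La_\infty,\, \rho+\La_\infty) - (\rho + \textstyle\sum_a\La_a,\ \rho + \sum_a\La_a),
\]
so only the first term can vary with $\bs k$, while the second is a constant depending only on the data $\bs\La$. (If the identity \Ref{B=( )} itself is not treated as given, it is a routine expansion using $(\al_j,\al_j)\tau_j = 2(\sum_a \La_a,\al_j)$ coming from \Ref{T} and $\langle \La_a,\al_j^\vee\rangle = 2(\La_a,\al_j)/(\al_j,\al_j)$, together with $(\rho,\al_j) = (\al_j,\al_j)/2$.)

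Next, I would apply Lemma \ref{Weyl LEM}: for any tuple $\tilde{\bs y}\in\mc P$ with degree vector $\tilde{\bs k}$, the weight $\La_\infty(\bs\La,\tilde{\bs k})$ lies in the shifted Weyl orbit of $\La_\infty(\bs\La,\bs k)$. By definition of the shifted action \Ref{shifted}, this means $\rho + \La_\infty(\bs\La,\tilde{\bs k}) = w(\rho + \La_\infty(\bs\La,\bs k))$ for some $w\in\W$. Since the symmetric bilinear form $(\,,\,)$ on $\h^*$ is $\W$-invariant, the quantity
\[
(\rho+\La_\infty(\bs\La,\tilde{\bs k}),\, \rho+\La_\infty(\bs\La,\tilde{\bs k}))
\]
is the same for every $\tilde{\bs y}\in\mc P$. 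Combining this with the identity above, $B(\tilde k_1,\dots,\tilde k_r)$ is independent of the choice of $\tilde{\bs y}\in\mc P$; call this common value $c(\mc P)$.

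Finally, I would verify integrality of $c(\mc P)$. From $(\al_j,\al_j)=2b_j$ and $(\al_i,\al_j)=b_i a_{i,j}$ with $b_i\in\Z_{>0}$ and $a_{i,j}\in\Z$, together with $\tau_j\in\Z_{\geq 0}$, formula \Ref{new BAE form} shows that $B(k_1,\dots,k_r)\in\Z$ whenever $k_1,\dots,k_r\in\Z$. Evaluating at the degree vector of any tuple in $\mc P$ gives $c(\mc P)\in\Z$.

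The main thing that could get in the way is the identity \Ref{B=( )} itself, but this is purely algebraic; no analysis of the critical point equations is needed once Lemma \ref{Weyl LEM} is in hand. All the real content of the lemma is packaged in that earlier lemma plus the Weyl invariance of $(\,,\,)$.
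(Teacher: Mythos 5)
Your proof is correct, but it takes a genuinely different route from the paper's. The paper argues analytically: by Lemma \ref{pop eqn} the numbers $\mu_1,\dots,\mu_n$ in equation \Ref{new BAE} are the same for all generic fertile tuples of $\mc P$, and equating to zero the coefficient of $x^{-2}$ in the Laurent expansion at infinity of the left-hand side of \Ref{new BAE} identifies $B(k_1,\dots,k_r)$ with the $x^{-2}$-coefficient at infinity of the last sum in that equation, a quantity depending only on $\mu_1,\dots,\mu_n$ and the data $q,\bs\La$, hence constant on $\mc P$. You instead combine the algebraic identity \Ref{B=( )} with Lemma \ref{Weyl LEM} and the $\W$-invariance of $(\,,\,)$: since $B(k_1,\dots,k_r)$ depends on $\bs k$ only through $(\rho+\La_\infty(\bs\La,\bs k),\rho+\La_\infty(\bs\La,\bs k))$, and the weights at infinity of the tuples of $\mc P$ form a single shifted Weyl orbit, $B$ is constant on $\mc P$. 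Both arguments are sound. Yours is more elementary (no Laurent expansion, no appeal to Theorem \ref{new BAE}) and applies directly to every tuple of $\mc P$ rather than only to the generic fertile ones; your explicit integrality check is also a sensible addition that the paper leaves implicit. What the paper's route buys in exchange is an explicit formula for the charge $c(\mc P)$ in terms of the numbers $\mu_1,\dots,\mu_n$, which is the datum connecting populations to characters of the Bethe algebra in the conjecture, and which your argument does not produce.
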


\begin{proof}
By Lemma \ref{pop eqn}, there exists numbers $\mu_1,\dots,\mu_n, \mu_1+\dots+\mu_n=0$, such that
all generic fertile tuples $\bs y\in\mc P$ satisfy equation \Ref{new BAE} with numbers $\mu_1,\dots,\mu_n$.
Equating to zero the coefficient of $x^{-2}$ in the Laurent expansion at infinity of the left hand side of  \Ref{new BAE}
we prove \Ref{B=c} for $c(\mc P)$ equal to the coefficient
of $x^{-2}$ in the Laurent expansion at infinity of $-\sum_{a=1}^n\frac{1}{x-z_a(q)}(\mu_a - \sum_{b\ne a}\frac{(\La_a,\La_b)}{z_a(q)-z_b(q)})$.
\end{proof}

The integer $c(\mc P)$ will be called the {\it charge} of the population $\mc P$.

Let $\mc P$ be a population associated with data $q, \bs\La$.
A tuple ${\bs y}\in \mc P$ with degree vector ${\bs k}=( k_1,\dots,k_r)$ will be  called
{\it minimal} if
$ \tau_j + 1- k_j - \sum_{i\ne j} a_{j,i} k_i > k_j$ for $ j=1,\dots,r$.
These inequalities can be rewritten as
\bean
\label{dominant}
 \tau_j + 1 - \sum_{i=1}^r a_{j,i}  k_i > 0 , \qquad j=1,\dots,r .
\eean

 \begin{lem}

Every population  has a minimal  tuple.
\end{lem}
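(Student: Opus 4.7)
The plan is to set up an iterative reduction. Starting with any $\bs y\in\mc P$, if $\bs y$ is not minimal I will produce an immediate descendant in $\mc P$ of strictly smaller total degree $|\bs k|:=k_1+\dots+k_r$; since $|\bs k|\in\Z_{\ge 0}$, iterating terminates at a minimal tuple in $\mc P$.

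Suppose $\bs y\in\mc P$ is not minimal, so some $j$ satisfies $\tilde k_j\le k_j$, where $\tilde k_j=\tau_j+1-k_j-\sum_{i\ne j}a_{j,i}k_i$. The crucial intermediate step is to show that fertility of $\bs y$ (automatic for members of $\mc P$) forces the strict inequality $\tilde k_j<k_j$. If the equality $\tilde k_j=k_j$ held, then the Wronskian equation $W(y_j,\tilde y_j)=T_j\prod_{i\ne j}y_i^{-a_{j,i}}$ required by fertility would have right-hand side of degree $\tilde k_j+k_j-1=2k_j-1$; but a direct degree count shows that whenever $\deg\tilde y_j=k_j$ the leading terms of $y_j\tilde y_j'$ and $y_j'\tilde y_j$ cancel, forcing $\deg W\le 2k_j-2$, while whenever $\deg\tilde y_j=d\ne k_j$ one has $\deg W=k_j+d-1$, which matches $2k_j-1$ only for $d=k_j$. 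Neither possibility admits a polynomial $\tilde y_j$, contradicting fertility.

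Given $\tilde k_j<k_j$, I use Theorem \ref{fertile cor} to obtain the one-parameter family $\bs y^{(j)}(x,c)$ of immediate descendants of $\bs y$ in direction $j$; all of these are fertile tuples in $\mc P$. I then exhibit within the family a tuple whose $j$-th component has degree strictly less than $k_j$: when $\tilde k_j\ge 0$ the natural choice $c=0$ gives $\deg\tilde y_j=\tilde k_j$; when $\tilde k_j<0$ the generic descendant has $\deg\tilde y_j=k_j$, but one value of $c$ kills its leading coefficient, lowering the degree to at most $k_j-1$. Either way the new tuple lies in $\mc P$ with $|\bs k|$ strictly smaller, completing the descent.

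The main obstacle is the Wronskian degree cancellation argument ruling out the wall case $\tilde k_j=k_j$: this is the step that uses fertility to convert mere weak dominance of $\La_\infty(\bs\La,\bs k)+\rho$ into the strict dominance demanded by the definition of minimality.
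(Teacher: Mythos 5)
The paper states this lemma without giving any proof, so there is nothing to compare against directly; judged on its own, your argument is correct and is essentially the natural way to supply the missing proof. Your descent on the total degree $k_1+\dots+k_r$ is the tuple-level version of the standard argument that the orbit of $\La_\infty(\bs\La,\bs k)+\rho$ under the Weyl group meets the dominant chamber (which is how the lemma reads after the subsequent lemma identifying minimality with dominance of $\La_\infty$, together with Lemma \ref{Weyl LEM}), and working directly with Wronskians lets you avoid any appeal to properties of the Tits cone. The two points that genuinely need checking are both handled: the exclusion of the wall case $\tilde k_j=k_j$ by the degree count $\deg W(y_j,\tilde y_j)\le 2k_j-2$ versus $\deg\bigl(T_j\prod_{i\ne j}y_i^{-a_{j,i}}\bigr)=2k_j-1$ is exactly right (this is where fertility of every member of $\mc P$, asserted in the paper's definition of a population, is used), and the existence of the one value of $c$ killing the coefficient of $x^{k_j}$ in the affine line of polynomial solutions $\tilde y_j(x,0)+cy_j$ gives the strictly smaller descendant. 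One small remark: your case $\tilde k_j<0$ is in fact vacuous --- the special descendant you produce there would be a nonzero polynomial solution of degree $d$ with $0\le d<k_j$, and the same degree count forces $d=\tilde k_j\ge 0$; so fertility already rules out $\tilde k_j<0$. Your treatment of that case is harmless, since all you need is a descendant of smaller degree, but you could have noted that it never occurs.
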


\begin{lem}
A tuple  ${\bs y}\in \mc P$ is minimal  if and only if the vector
$\La_\infty(\bs \La, {\bs k})$ is integral dominant.

\end{lem}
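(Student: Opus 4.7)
The plan is to show that both conditions in the lemma reduce to the same inequalities by directly computing the pairings $\langle \La_\infty(\bs\La, \bs k), \al_j^\vee\rangle$ for $j=1,\dots,r$.

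First I would unpack $\La_\infty(\bs\La,\bs k) = \sum_{a=1}^n \La_a - \sum_{i=1}^r k_i \al_i$ and evaluate
\begin{equation*}
\langle \La_\infty(\bs\La,\bs k),\al_j^\vee\rangle
= \sum_{a=1}^n \langle \La_a,\al_j^\vee\rangle - \sum_{i=1}^r k_i \langle \al_i,\al_j^\vee\rangle
= \sum_{a=1}^n \langle \La_a,\al_j^\vee\rangle - \sum_{i=1}^r a_{j,i} k_i,
\end{equation*}
using $\langle \al_i,\al_j^\vee\rangle = a_{j,i}$ from Section~\ref{Kac_Moody sec}. Then I would identify $\sum_{a=1}^n \langle\La_a,\al_j^\vee\rangle$ with $\tau_j = \deg T_j$, which is immediate from the definition \Ref{T} of $T_j(x)$. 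This yields the key identity
\begin{equation*}
\langle \La_\infty(\bs\La,\bs k),\al_j^\vee\rangle = \tau_j - \sum_{i=1}^r a_{j,i} k_i.
\end{equation*}

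Next I would handle integrality: since each $\La_a\in P^+$ we have $\langle \La_a,\al_j^\vee\rangle\in \Z_{\geq 0}$, and $a_{j,i}\in\Z$, so the right-hand side is automatically an integer for every $j$. Thus $\La_\infty(\bs\La,\bs k)\in P$ always, and being integral dominant is equivalent to
$$\tau_j - \sum_{i=1}^r a_{j,i} k_i \geq 0 \qquad \text{for } j=1,\dots,r.$$

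Finally I would compare with the minimality condition \Ref{dominant}, namely $\tau_j + 1 - \sum_{i=1}^r a_{j,i} k_i > 0$. Since both sides of these inequalities are integers, the strict inequality $\tau_j + 1 - \sum_i a_{j,i} k_i > 0$ is equivalent to the weak inequality $\tau_j - \sum_i a_{j,i} k_i \geq 0$, giving the claimed equivalence.

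There is essentially no obstacle here: the proof is a direct computation, and the only subtlety is the shift-by-one between the strict and weak inequalities, which is resolved by the integrality observation above.
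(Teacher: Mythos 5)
Your proof is correct, and since the paper states this lemma without proof (it is treated as an immediate computation), your argument is exactly the intended one: the identity $\langle \La_\infty(\bs\La,\bs k),\al_j^\vee\rangle = \tau_j - \sum_{i=1}^r a_{j,i}k_i$ together with integrality turns the strict inequality \Ref{dominant} into the weak dominance condition. The care you take with $\langle\al_i,\al_j^\vee\rangle = a_{j,i}$ (index order) and with the shift-by-one via integrality is precisely where a sloppy version would go wrong, and you handle both correctly.
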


\begin{lem}
\label{ineq}
Let $\mc P$ be a population associated with data $q,\bs\La$. Let $\bs y\in \mc P$ be a minimal tuple
with degree vector $\bs k=(k_1,\dots,k_r)$. Then either  $\bs k=0$ and $c(\mc P)=0$ or
$\bs k\ne 0$, $c(\mc P)<0$ and
\bean
\label{charge min}
c(\mc P)+\sum_{j=1}^r b_j(\tau_j+1)k_j < 0.
\eean

\end{lem}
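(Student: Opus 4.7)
The plan is to reduce everything to a single algebraic identity for $B(\bs k)$ written in terms of $\beta := \sum_j k_j \al_j$, then read off both conclusions from the minimality inequalities.

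First I would rewrite the charge using \Ref{B=( )}. Applying the identity $(a,a)-(b,b)=(a-b,a+b)$ with $a = \rho + \sum_a \La_a - \beta$ and $b = \rho + \sum_a \La_a$, one gets
\begin{equation*}
B(\bs k) \;=\; (\beta,\beta) \;-\; 2(\beta,\rho) \;-\; 2\Big(\beta,\sum_{a=1}^n\La_a\Big).
\end{equation*}
Using $(\al_j,\rho) = (\al_j,\al_j)/2 = b_j$ and $(\al_j,\sum_a\La_a) = b_j\tau_j$ (the latter from $\tau_j = \langle \sum_a\La_a,\al_j^\vee\rangle$ together with $\langle\mu,\al_j^\vee\rangle = 2(\mu,\al_j)/(\al_j,\al_j)$), this collapses to the key formula
\begin{equation*}
c(\mc P) \;=\; B(\bs k) \;=\; (\beta,\beta) \;-\; 2\sum_{j=1}^r b_j(\tau_j+1)k_j.
\end{equation*}
The case $\bs k = 0$ is then immediate: $\beta = 0$, so $c(\mc P) = 0$.

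Next, for $\bs k \ne 0$ I would translate minimality into a strict inequality involving $\beta$. Since $a_{j,j}=2$, the minimality condition $\tau_j+1-k_j-\sum_{i\ne j}a_{j,i}k_i > k_j$ becomes $\tau_j + 1 > \sum_i a_{j,i}k_i = \langle\beta,\al_j^\vee\rangle$, equivalently $b_j(\tau_j+1) > b_j\langle\beta,\al_j^\vee\rangle = (\beta,\al_j)$, strictly for every $j$. Multiplying by $k_j \ge 0$, summing over $j$, and noting that at least one $k_j$ is strictly positive (since $\bs k \ne 0$) while $b_j > 0$, the inequality remains strict after summation:
\begin{equation*}
\sum_{j=1}^r b_j(\tau_j+1)k_j \;>\; \sum_{j=1}^r (\beta,\al_j)k_j \;=\; (\beta,\beta).
\end{equation*}

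Finally, combining this with the key formula gives
$c(\mc P) + \sum_j b_j(\tau_j+1)k_j = (\beta,\beta) - \sum_j b_j(\tau_j+1)k_j < 0$, which is \Ref{charge min}. For $c(\mc P)<0$ itself, observe that since $\bs k \ne 0$, $b_j>0$, and $\tau_j + 1 \ge 1$, the sum $\sum_j b_j(\tau_j+1)k_j$ is strictly positive, so $c(\mc P) < -\sum_j b_j(\tau_j+1)k_j < 0$. There is no real obstacle here: the only thing to keep straight is the correct identification $(\al_j,\al_j)=2b_j$ and $(\La_a,\al_j)=b_j\langle\La_a,\al_j^\vee\rangle$, which is what turns the minimality condition into an inequality comparable with the $\sum b_j(\tau_j+1)k_j$ term in the rewritten $B$.
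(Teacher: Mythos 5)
Your proof is correct and follows essentially the same route as the paper: both arguments multiply the minimality inequalities $\tau_j+1-\sum_i a_{j,i}k_i>0$ by $b_jk_j\ge 0$, sum over $j$ (strict since $\bs k\ne 0$), and identify the resulting sum with $-c(\mc P)-\sum_j b_j(\tau_j+1)k_j$ via $B(\bs k)=c(\mc P)$. The only cosmetic difference is that you route the algebra through the inner-product form \Ref{B=( )} of $B$ (writing things in terms of $\beta=\sum_j k_j\al_j$), whereas the paper manipulates the explicit quadratic polynomial \Ref{new BAE form} directly.
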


\begin{proof} If $\bs k=0$, then $c(\mc P)=0$.
If $\bs k\ne 0$, then $\sum_{j=1}^rb_j  k_j (\tau_j + 1 - \sum_{i=1}^r a_{j,i}  k_i) > 0$.
Hence
\bea
 0 &<& \sum_{j=1}^rb_j  k_j (\tau_j + 1 - \sum_{i=1}^r a_{j,i}   k_i) =
\\
&=& \sum_{j=1}^rb_j  k_j (\tau_j + 1 - \sum_{i=1}^r a_{j,i}   k_i)+ B( k_1,\dots, k_r)- c(\mc P)
 = -c(\mc P)- \sum_{j=1}^r b_j k_j (\tau_j + 1) .
\eea
\end{proof}

The tuple $\bs y^\emptyset=(1,\dots,1)$ is generic and fertile with respect to any data $q,\bs\La$.
 Denote by $\mc P_{y^\emptyset,q,\bs \La}\subset \PCr$ the population of tuples generated from
$\bs y^\emptyset$. Clearly we have $c(\mc P_{\bs y^\emptyset, q,\bs\La})=0$.

\begin{thm}
\label{thm c=0}
The population $\mc P_{\bs y^\emptyset, q,\bs\La}$ is the only population $\mc P$ associated with the data $q,\bs\La$ and such that
$c(\mc P)=0$.

\end{thm}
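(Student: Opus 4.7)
The plan is to reduce the theorem to the two lemmas immediately preceding it: the existence of a minimal tuple in every population, and the sign/inequality assertion of Lemma \ref{ineq}.

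First I would invoke the lemma that every population has a minimal tuple to pick some minimal $\bs y \in \mc P$ with degree vector $\bs k = (k_1, \dots, k_r)$. By Lemma \ref{ineq} this minimal tuple splits into two mutually exclusive cases: either $\bs k = 0$ (and $c(\mc P) = 0$), or $\bs k \neq 0$ (and $c(\mc P) < 0$, in fact $c(\mc P) + \sum_j b_j(\tau_j+1)k_j < 0$). Since the hypothesis $c(\mc P) = 0$ rules out the second case, I conclude $\bs k = 0$.

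Next I observe that in $\PCr$, a tuple with degree vector $\bs k = 0$ is $r$-tuple of nonzero constants, which modulo rescaling by $\C^\times$ in each factor is exactly $\bs y^\emptyset = (1, \dots, 1)$. Thus $\bs y^\emptyset \in \mc P$. But $\bs y^\emptyset \in \mc P_{\bs y^\emptyset, q, \bs\La}$ trivially, so the two populations intersect. Finally, I apply the lemma stating that two populations associated with the same data $q, \bs\La$ either coincide or are disjoint, to conclude $\mc P = \mc P_{\bs y^\emptyset, q, \bs\La}$.

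There is essentially no main obstacle: the substantive work has already been done in Lemma \ref{ineq}, whose inequality $c(\mc P) + \sum_j b_j(\tau_j+1)k_j < 0$ was tailored exactly for this kind of argument (forcing charge $0$ to imply degree $0$). The only thing to double-check is the identification of the degree-zero tuple in $\PCr$ with $\bs y^\emptyset$, which is immediate from the projective-space convention under which constants are identified.
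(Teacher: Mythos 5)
Your proposal is correct and follows the paper's own argument exactly: pick a minimal tuple, apply Lemma \ref{ineq} with $c(\mc P)=0$ to force $\bs k=0$, identify the resulting tuple with $\bs y^\emptyset$, and conclude via the fact that intersecting populations coincide. The paper's proof is just a terser version of the same reasoning.
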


\begin{proof}
 Let $\mc P$ be any such population. Let $\bs y\in \mc P$ be a minimal tuple and $\bs k$  its degree vector. By Lemma \ref{ineq}, we get $\bs k=0$.
Hence $\mc P$ contains $\bs y^\emptyset$ and therefore $\mc P=\mc P_{\bs y^\emptyset, q,\bs\La}$.
\end{proof}

If $n=1$, then $q\in\C$ and $\bs\La=(\La_1)$.

\begin{thm}
\label{thm n=1}
If $n=1$, then $\mc P_{\bs y^\emptyset, q,\bs\La}$ is the only populations associated with data $q,\bs\La$.

\end{thm}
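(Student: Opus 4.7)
The plan is to reduce Theorem \ref{thm n=1} directly to Theorem \ref{thm c=0} by showing that when $n=1$ the charge $c(\mc P)$ of any population $\mc P$ associated with the data $q,\bs\La$ is automatically zero.

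First I would inspect the explicit formula for $c(\mc P)$ that comes out of the proof of the preceding lemma. That proof says $c(\mc P)$ is the coefficient of $x^{-2}$ in the Laurent expansion at infinity of
\[
-\sum_{a=1}^n\frac{1}{x-z_a(q)}\left(\mu_a-\sum_{b\ne a}\frac{(\La_a,\La_b)}{z_a(q)-z_b(q)}\right),
\]
where $\mu_1,\dots,\mu_n$ are the numbers attached to $\mc P$ by Theorem \ref{new BAE} and Lemma \ref{pop eqn} (well defined since they do not depend on the chosen generic fertile representative of $\mc P$). Expanding $1/(x-z_a(q))=1/x+z_a(q)/x^{2}+\cdots$ and collecting terms one reads off
\[
c(\mc P)=-\sum_{a=1}^n z_a(q)\Bigl(\mu_a-\sum_{b\ne a}\frac{(\La_a,\La_b)}{z_a(q)-z_b(q)}\Bigr).
\]

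Now I would specialize to $n=1$. The constraint $\mu_1+\cdots+\mu_n=0$ from Theorem \ref{new BAE} collapses to $\mu_1=0$, and the inner sum over $b\ne a$ is empty. Hence the formula above gives $c(\mc P)=0$ for every population $\mc P$ associated with the data $q,\bs\La$.

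Finally, Theorem \ref{thm c=0} asserts that $\mc P_{\bs y^\emptyset, q,\bs\La}$ is the \emph{unique} population with charge zero, so any such $\mc P$ must coincide with $\mc P_{\bs y^\emptyset, q,\bs\La}$. There is no genuine obstacle here since the heavy lifting (the minimality argument of Lemma \ref{ineq} and Theorem \ref{thm c=0}) is already in place; the only point that needs care is checking that the coefficient extraction really yields the displayed formula for $c(\mc P)$, which is a direct Laurent expansion and uses no new input.
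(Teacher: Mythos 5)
Your proposal is correct and follows essentially the same route as the paper: for $n=1$ the constraint $\mu_1+\dots+\mu_n=0$ forces $\mu_1=0$, hence $c(\mc P)=0$, and Theorem \ref{thm c=0} finishes the argument. The explicit Laurent-coefficient computation you include is a harmless elaboration of the same step.
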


\begin{proof}
Let $n=1$. Let $\mc P$  be a population associated with data $q,\bs\La$ and $\bs y\in\mc P$ a generic fertile tuple. Then
 the numbers $\mu_1,\dots,\mu_n$ in formula \Ref{new BAE} are equal to zero. Hence $c(\mc P)=0$. By Theorem \ref{thm c=0} we get
$\mc P=\mc P_{\bs y^\emptyset, q,\bs\La}$.
\end{proof}


\begin{rem}
In \cite{ScV, MV1, MV2, F}, it is shown that a population $\mc P\subset\PCr$ is a variety isomorphic to
the flag variety of the Kac-Moody algebra $\g(A^t)$ Langlands dual to the Kac-Moody algebra $\g(A)$.
\end{rem}

\begin{rem}

If $\g(A)$ is an affine Lie algebra of type $A_r$ or $A^{(2)}_2$, the population
$\mc P_{\bs y^\emptyset,q,\bs\La=0}$ for $n=1$ was used in
\cite{VW, VWW} to construct rational solutions of the corresponding integrable hierarchy.

\end{rem}

\subsection{Conjecture}

\begin{conj}
Given a Kac-Moody algebra $\g(A)$ and data $q,\bs \La$, the populations $\mc P$ are in one-to-one correspondence with characters $\chi : \B\to \C$
of the Bethe algebra $\B$ of the Gaudin model associated with the tensor product $\otimes_{a=1}^n L_{\La_a}$
of the irreducible $\g(A)$-modules  $L_{\La_a}$ with highest weights $\La_a$.
\end{conj}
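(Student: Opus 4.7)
The plan is to associate, to every population $\mc P$ with data $q,\bs\La$, a character $\chi_{\mc P}:\B\to\C$ coming from the Bethe-ansatz construction, and then to prove that the assignment $\mc P\mapsto\chi_{\mc P}$ is well-defined, injective, and surjective. Concretely, given a generic fertile tuple $\bs y\in\mc P$ with degree vector $\bs k$, the standard Bethe construction produces a singular vector $v(\bs y)$ of weight $\La_\infty(\bs\La,\bs k)$ in $\otimes_{a=1}^n L_{\La_a}$, which is a joint eigenvector of $\B$; its eigenvalues are rational functions of $x$ encoded by $\bs y$ together with the numbers $\mu_1,\dots,\mu_n$ of Theorem \ref{new BAE}. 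I would define $\chi_{\mc P}$ to be this system of eigenvalues.

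Well-definedness reduces to showing that $\chi_{\mc P}$ does not depend on the chosen $\bs y\in\mc P$. The numbers $\mu_a$ are already population invariants by Lemma \ref{pop eqn}; to upgrade this to the full character one uses the geometric description recalled in the remark after Theorem \ref{thm n=1}, according to which $\mc P$ is the set of Miura transforms of a single $\g(A^t)$-oper on $\pone$ with prescribed singularities at $z_1(q),\dots,z_n(q),\infty$. Via the Feigin--Frenkel identification of $\B$ with functions on the space of such opers, the character $\chi_{\mc P}$ is literally the evaluation at this oper, so it depends only on $\mc P$. Injectivity then becomes tautological: from $\chi_{\mc P}$ one reads off the oper, and the population is recovered as its set of Miura transforms. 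The first real step is therefore to write down this Feigin--Frenkel picture carefully for an arbitrary symmetrizable Kac-Moody $\g(A)$, since the published statements are usually given for finite type, and to check that the generation procedure of Section \ref{crit pts} matches the Miura-transform action.

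The deep step is surjectivity, i.e.\ completeness of the Bethe ansatz: every character of $\B$ on the relevant singular subspaces must arise from an actual critical point. A natural strategy is to work over a stratification of the parameter space by $q$ and by the shifted Weyl orbit of $\La_\infty(\bs\La,\bs k)$, reducing to a count of populations with a fixed minimal tuple (Lemmas \ref{Weyl LEM}--\ref{ineq}), and then to compare this count to the rank of $\B$ acting on the corresponding singular weight subspace. For generic $q$ one would hope to establish semisimplicity of $\B$ by a deformation argument, e.g.\ a collision or $q\to\infty$ limit where eigenvalues can be computed directly, and then pass to all $q$ by an upper-semicontinuity argument. I expect this completeness/surjectivity step to be the main obstacle: even in finite type it is highly nontrivial and only fully established in type $A$ (Mukhin--Tarasov--Varchenko), while for general Kac-Moody, and in particular affine, $\g(A)$ the singular subspaces of $\otimes_a L_{\La_a}$ may be infinite dimensional and the Bethe algebra itself is only defined through its Feigin--Frenkel center, so one must develop a framework in which both sides of the conjectural bijection are genuinely finite over each stratum before a dimension count can even be stated.
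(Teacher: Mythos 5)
The statement you are trying to prove is labelled a \emph{Conjecture} in the paper, and the paper gives no proof of it. The authors only offer evidence: for $\g(A)=\frak{sl}_{r+1}$ they note it follows from the main theorem of [MTV] together with the description of $\frak{sl}_{r+1}$-populations in [MV1, Section~5], and for $n=1$ it follows from Theorem~\ref{thm n=1}, since in that case $\B$ acts by scalars on $L_{\La_1}$, has the single character $\on{id}:\C\to\C$, and the paper shows there is exactly one population. Your proposal is therefore not being measured against an existing argument; it is a research program for an open problem, and by your own admission it does not close. That is the genuine gap: you do not prove the statement, you outline what a proof would need.

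Within the program itself, the two load-bearing steps are exactly where the known obstructions sit, and neither is reduced to anything established. First, the assignment $\mc P\mapsto\chi_{\mc P}$ presupposes a Feigin--Frenkel/oper parametrization of characters of $\B$ for an arbitrary symmetrizable Kac-Moody algebra; the paper's remark after Theorem~\ref{thm n=1} (citing [ScV, MV1, MV2, F]) only says that a population is isomorphic to the flag variety of $\g(A^t)$, which is a statement about the internal structure of $\mc P$, not a parametrization of $\B$-characters by opers --- and the Bethe algebra itself is only defined in the cited references ([FFR, T, MTV]) in settings far short of general Kac-Moody type, where the singular weight subspaces of $\otimes_a L_{\La_a}$ can be infinite dimensional. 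The only population invariant actually available from the paper is the tuple $(\mu_1,\dots,\mu_n)$ of Lemma~\ref{pop eqn}, and the Remark following that lemma shows explicitly (the $\frak{sl}_3$ example with $T_1=T_2=x^2-1$) that these numbers do \emph{not} separate populations, so your ``upgrade to the full character'' is essential and unsupported. Second, surjectivity is the completeness of the Bethe ansatz, which you correctly identify as open outside type $A$; the stratification and dimension-count strategy you sketch cannot even be formulated until finiteness of both sides over each stratum is established. In short: a reasonable plan, consistent with the partial results the paper does prove (Theorems~\ref{thm c=0} and~\ref{thm n=1}), but not a proof, and the paper does not claim one either.
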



The definition of the Bethe algebra of the Gaudin model see in \cite{FFR, T, MTV}.

The statement of the conjecture for $\g(A)=\frak{sl}_{r+1}$
is a corollary of the main theorem in \cite{MTV} and the description of  $\frak{sl}_{r+1}$-populations $\mc P\subset \PCr$ in \cite[Section 5]{MV1}.
\medskip

Conjecture also holds for $n=1$. Indeed, in  this case, the Bethe algebra $\B$ associated with
an irreducible highest weight
representation $L_{\La_1}$ consists of scalar operators on $L_{\La_1}$ and is isomorphic to $\C$. The Bethe algebra has the single character
$ id : \C\to\C$ and the conjecture says that there is only one population associated with $\g(A)$ and the date $q\in\C$, $\bs \La=(\La_1)$.
Our  Theorem \ref{thm n=1} says that indeed there is exactly one population and
this is the population  $\mc P_{\bs y^\emptyset, q,\bs\La}\subset \PCr$ generated from the fertile generic
 tuple $\bs y^\emptyset=(1,\dots,1)$ representing the critical point of the master function with no variable and associated with $\bs k=(0,\dots,0)$.



\begin{thebibliography}{ccc}




\bibitem[FFR]{FFR}  B. Feigin, E. Frenkel, and N. Reshetikhin,
{\it  Gaudin model, Bethe Ansatz and Critical Level},
Commun. Math. Phys. {\bf 166} (1994),29--62.




\bibitem[F]{F}  E. Frenkel,
{\it Opers on the projective line, flag manifolds and Bethe anzatz},
 Mosc. Math. J. 4 (2004), no. 3, 655�705, 783





\bibitem[K]{K} V. Kac, {\it Infinite-dimensional Lie algebras},
  Cambridge University Press, 1990.

\bibitem[MSTV]{MSTV} E. Mukhin, V. Schechtman, V. Tarasov, A. Varchenko,
{\it On a new form of Bethe ansatz equations and separation of variables in the $sl_3$ Gaudin model},
Proc. Steklov Inst. Math. 258 (2007), no. 1, 155--177


\bibitem[MTV]{MTV} E. Mukhin,  V. Tarasov, A. Varchenko,
{\it Schubert calculus and representations of the general linear group},
J. Amer. Math. Soc. {\bf } (2009), no. 4, 909--940


\bibitem[MV1]{MV1} E. Mukhin and A. Varchenko,
{\it Critical Points of Master Functions and Flag Varieties},
math.QA/0209017 (2002), 1--49.



\bibitem[MV2]{MV2} E. Mukhin and A. Varchenko,
{\it Populations of solutions of the XXX Bethe equations associated to
Kac-Moody algebras}, math.QA/0212092 (2002), 1--8.





\bibitem[ScV]{ScV} I. Scherbak and A. Varchenko, {\it Critical point of
    functions, $sl_2$ representations and Fuchsian differential
    equations with only univalued solutions}, math. QA/0112269, (2001)
    1--25.



\bibitem[SV]{SV} V. Schechtman and A. Varchenko, {\it Arrangements of
    hyperplanes and Lie algebra homology}, Invent. Math., {\bf 106}
    (1991), 139--194.


\bibitem[T]{T} D. Talalaev, {\it Quantization of the Gaudin System}, Preprint  (2004), 1--19, {\tt hep-th/0404153}


\bibitem[VWW]{VWW} A. Varchenko,  T. Woodruff, D. Wright,  {\it     Critical points of master functions and the mKdV hierarchy of type $A^2_2$},
Preprint (2013), 1--42, {\tt   arXiv:1305.5603}


\bibitem[VW]{VW} A. Varchenko, D. Wright, {\it Critical points of master functions and integrable hierarchies},
Preprint (2012), 1--42, {\tt  arXiv:1207.2274}




\end{thebibliography}
\end{document}